\newtheorem{theorem}{Theorem}
\newtheorem{definition}{Definition}
\newtheorem{proposition}[definition]{Proposition}
\newtheorem{corollary}[definition]{Corollary}
\newcommand{\id}{{\mathbf 1}}
\newcommand{\ox}{\mathcal{O}_{X}}
\newcommand{\opn}{\mathcal{O}_{\mathbb{P}^n}}
\DeclareMathOperator{\codim}{{codim}}
\newcommand{\marginnote}[1]{\ifthenelse{\isodd{\thepage}}{\normalmarginpar}
{\reversemarginpar}\marginpar{\fbox{\parbox{24mm}{\sloppy\footnotesize
#1}}}}
 \newcommand{\pn}{{\mathbb{P}^n}}
\newcommand{\F}{{\mathcal{F}}}
\begin{document}

\subjclass{Primary: 32S65; Secondary: 37F75, 58A17}

\title{Bounds for sectional genera of varieties invariant under Pfaff fields}

\author{Maur\'{\i}cio Corr\^ea  Jr.}
\address{Departamento de Matem\'atica, Universidade Federal de Vi\c{c}osa-UFV, Avenida P.H. Rolfs, 36571-000 Brazil}
\email{mauricio.correa@ufv.br}
\thanks{Partially supported by CNPq.}

\author{Marcos Jardim}
\address{Instituto de Matem\'atica, Estat\'{\i}stica e Computa\c{c}\~ao Cient\'{\i}fica \\ Universidade Estadual de Campinas \\ Rua S\'ergio Buarque de Holanda, 651 \\ Campinas, SP, Brazil \\ CEP 13083-859}
\email{jardim@ime.unicamp.br}
\thanks{Partially supported by the CNPq grant number 305464/2007-8 and the FAPESP grant number 2005/04558-0.}

%\date{\today}

\begin{abstract}
We establish an upper bound for the sectional genus of varieties
which are invariant under Pfaff fields on projective spaces.
\end{abstract}

\maketitle

\section{Introduction}

In \cite{Pa} P. Painlev\'e  asked the following question: \emph{``Is
it possible to recognize the genus of the general solution of an
algebraic differential equation in two variables which has a
rational first integral?}" In \cite{N}, Lins Neto  has constructed
families of foliations with fixed degree and local analytic type of
the singularities where foliations with rational first integral of
arbitrarily large degree appear. In other words, such families show
that Painlev\'e's question has a negative answer.

However, one can obtain an affirmative answer to Painlev\'e's question
provided some additional hypotheses are made. The problem of
bounding the genus of an invariant curve in terms of the degree of a
foliation on $\mathbb{P}^n$ has been considered by several authors,
see for instance \cite{CN,E-K}. In \cite{C-C-F}, Campillo, Carnicer
and de la Fuente showed that if  $C$ is a reduced curve which is invariant by a one-dimensional foliation  $\F$ on $\mathbb{P}^n$ then
\begin{equation}\label{ccf}
\frac{2p_a(C)-2}{\deg(C)}\leq \deg(\F)-1+a,
\end{equation}
where $p_a(C)$ is the arithmetic genus of $C$ and $a$ is an integer
obtained from the concrete problem of imposing singularities
 to projective hypersurfaces. For instance, if $C$ has only nodal singularities then $a=0$, and thus formula $(\ref{ccf})$
 follows from \cite{Fu}. This bound has been improved by Esteves and Kleiman in \cite{E-K}.

Painlev\'e's question is related to the problem posed by Poincar\'e in
\cite{Po} of bounding the degree of algebraic solutions of an
algebraic differential equation on the complex plane. Nowadays, this
problem is known as $\emph{Poincar\'e's Problem}$. Many mathematicians
have been working on it and on some of its generalizations, see for
instance the papers by Cerveau and Lins Neto \cite{CN}, Carnicer
\cite{C}, Pereira \cite{Pe}, Soares \cite{S}, Brunella and Mendes
\cite{B-M}, Esteves and Kleiman \cite{E-K}, Cavalier and Lehmann
\cite{C-L}, and Zamora \cite{Za}.

In \cite{E-K}, Esteves and Kleiman extended Jouanolou's work on
algebraic Pfaff systems on a nonsingular scheme $V$. Essentially,
an algebraic Pfaff system  is a singular distribution. More
precisely, an algebraic Pfaff system of rank $r$ on a nonsingular
scheme $X$ of pure dimension $n$ is, according to Jouanolou \cite[pp.
$136$-$38$]{J}, a nonzero map $u:E\rightarrow \Omega_{X}^1$ where
$E$ is a locally free sheaf of constant rank $r$ with $1\leq r\leq
n-1$. Esteves and Kleiman introduced the notion of a \emph{Pfaff
field} on $V$, which is a nontrivial sheaf map $\eta:\Omega_{V}^k\to
L$, where $L$ is a invertible sheaf on $V$, and the integer $1\leq
k\leq n-1$ is called the rank of $\eta$. A subvariety $X\subset V$
is said to be invariant under $\eta$ if the map $\eta$ factors
through the natural map $\Omega^k_{V}|_{X}\to\Omega^k_X$. A Pfaff
system on $V$  induces, via exterior powers and the perfect pairing
of differential forms, a Pfaff field on $V$. However, the converse
is not true; see \cite[Section 3]{E-K} for more details.

In this paper, we establish new upper bounds for the sectional genera
of nonsingular projective varieties which are invariant under Pfaff
fields on $\mathbb{P}^n$.

First, we use the hypothesis of stability (in the
sense of Mumford--Takemoto) of the tangent bundle of $X$ to
establish an upper bound for the sectional genus in terms of
the degree and the rank of a Pfaff field.

More precisely, our first main result is the following. Let
$g(X,\mathcal{O}_{X}(1))$ denote the sectional genus of $X$ with
respect to the line bundle $\mathcal{O}_{X}(1)$ associated to
the hyperplane section.

\begin{theorem}\label{2}
Let $X$ be a nonsingular projective variety of dimension $m$ which
is invariant under a Pfaff field $\F$ of rank $k$ on
$\mathbb{P}^n$; assume that $m\geq k$. If the tangent bundle
$\Theta_{X}$ is stable, then
\begin{equation}\label{f2}
\frac{2g(X,\mathcal{O}_{X}(1))-2}{\deg(X)} \leq
\dfrac{\deg(\F)-k}{{m-1\choose k-1}}+m-1.
\end{equation}
\end{theorem}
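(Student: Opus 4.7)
The first step is to rewrite the sectional genus in Chern-class form: the identity
\[
2g(X,\mathcal{O}_X(1))-2 \;=\; (K_X+(m-1)H)\cdot H^{m-1} \;=\; K_X\cdot H^{m-1}+(m-1)\deg(X),
\]
with $H=\mathcal{O}_X(1)$, reduces the theorem to the equivalent intersection bound
\[
\binom{m-1}{k-1}\,K_X\cdot H^{m-1}\;\le\;(\deg\F-k)\,\deg(X).
\]

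Next I would use invariance. By definition, the fact that $X$ is invariant under the Pfaff field $\eta:\Omega^k_{\p}\to L$ means that $\eta$ factors through the canonical surjection $\Omega^k_{\p}|_X\twoheadrightarrow\Omega^k_X$, producing a nonzero sheaf map $\eta_X:\Omega^k_X\to L|_X$; the assumption $m\ge k$ ensures that $\Omega^k_X$ is nontrivial. In the standard normalization one has $L=\mathcal{O}_{\p}(\deg\F-k)$. Dualizing $\eta_X$ gives an injection $L^{-1}|_X\hookrightarrow\wedge^k\Theta_X$, equivalently a nonzero global section of $L|_X\otimes\wedge^k\Theta_X$.

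The stability hypothesis enters next. Since $\Theta_X$ is Mumford--Takemoto stable with respect to $\mathcal{O}_X(1)$ and we work in characteristic zero, the theorem of Ramanan--Ramanathan guarantees that all exterior powers of $\Theta_X$ are $\mathcal{O}_X(1)$-semistable. Tensoring by a line bundle preserves semistability, so $L|_X\otimes\wedge^k\Theta_X$ is semistable. The presence of a nonzero section $\mathcal{O}_X\hookrightarrow L|_X\otimes\wedge^k\Theta_X$ therefore forces $\mu_H(L|_X\otimes\wedge^k\Theta_X)\ge 0$. Expanding this via the Chern-class identities $c_1(\wedge^k\Theta_X)=-\binom{m-1}{k-1}K_X$ and $c_1(L|_X)=(\deg\F-k)H|_X$ produces a linear inequality between $K_X\cdot H^{m-1}$, $\deg(X)$, and $\deg\F$.

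The main obstacle I anticipate is matching the precise combinatorial coefficient in the final bound: a direct slope inequality from the injection $L^{-1}|_X\hookrightarrow\wedge^k\Theta_X$ naturally produces the rank $\binom{m}{k}$ in the denominator, whereas the theorem isolates $\binom{m-1}{k-1}$. Reconciling these --- presumably by exploiting that $L^{-1}|_X$ descends from the determinant of a rank-$k$ sub-distribution of $\Theta_{\p}$ and then applying the stability of $\Theta_X$ to the induced rank-$k$ subsheaf of $\Theta_X$ itself, rather than to its $k$-th wedge power --- is where the most careful bookkeeping will be needed.
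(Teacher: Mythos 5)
Your overall strategy is the one the paper follows: invariance produces a nonzero section of $\bigwedge^k\Theta_X\otimes\mathcal{O}_X(d-k)$, $d=\deg(\F)$; stability of $\Theta_X$ gives semistability of $\bigwedge^k\Theta_X$ (you cite Ramanan--Ramanathan where the paper cites Ancona--Ottaviani); and a nonzero section of a semistable sheaf forces nonnegative slope. The problem is that you stop exactly where the stated inequality still has to be extracted, and what you concede there is a genuine gap, not bookkeeping. Writing $H=\mathcal{O}_X(1)$, nonnegativity of the slope of $\bigwedge^k\Theta_X\otimes\mathcal{O}_X(d-k)$ gives
$$
\frac{\binom{m-1}{k-1}}{\binom{m}{k}}\,K_X\cdot H^{m-1}\le (d-k)\deg(X),
\qquad\text{i.e.}\qquad
\frac{2g(X,\mathcal{O}_X(1))-2}{\deg(X)}\le \frac{m}{k}\,(\deg\F-k)+m-1,
$$
because $\mu_H(\bigwedge^k\Theta_X)=k\,\mu_H(\Theta_X)=-\tfrac{k}{m}K_X\cdot H^{m-1}$. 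Since $\binom{m}{k}/\binom{m-1}{k-1}=m/k$, this is strictly weaker than (\ref{f2}) whenever $k<m$ and $\deg\F>k$; the two bounds coincide only when $k=m$ (where your argument does give $\deg\F-1$, matching Theorem \ref{1}). So as written your proposal proves a weaker statement and leaves (\ref{f2}) unproved; the factor $\binom{m}{k}$ in the rank cannot be argued away within the semistability-of-$\bigwedge^k\Theta_X$ framework alone.

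The repair you sketch does not work either: a Pfaff field of rank $k$ is only a nonzero map $\Omega^k_{\mathbb{P}^n}\to L$, equivalently a section of $\bigwedge^k\Theta_{\mathbb{P}^n}\otimes L$, and unlike a Pfaff system (a genuine distribution $E\to\Omega^1$) it does not determine a rank-$k$ subsheaf of $\Theta_{\mathbb{P}^n}$, let alone of $\Theta_X$, unless the section is locally decomposable --- a hypothesis neither you nor the theorem assumes; the paper's introduction stresses precisely this distinction between Pfaff systems and Pfaff fields. For comparison, the paper's own proof arrives at the coefficient $\binom{m-1}{k-1}$ at exactly the spot you flagged, by evaluating $\deg(\bigwedge^k\Theta_X\otimes\mathcal{O}_X(d-k))$ as $\deg(\bigwedge^k\Theta_X)+(d-k)\deg(X)$, i.e.\ without the factor $\mathrm{rk}\,\bigwedge^k\Theta_X=\binom{m}{k}$ on the twist; your expansion of $c_1$ of the twisted bundle is the correct one, so the obstacle you identified is real and is not resolved by the route you proposed --- your argument is complete only in the case $k=m$.
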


To the best of our knowledge, this is the first time that the
stability of the tangent bundle is used to obtain such bounds.
Notice that the left-hand side of inequality (\ref{f2}) does not
change when we take generic linear sections $\mathbb{P}^l\subset\mathbb{P}^n$,
while the right-hand side gets larger, and so the bound becomes worse. This means that
the above result is a truly higher dimensional one.

Examples of projective varieties with stable tangent bundle are
Calabi--Yau \cite{T}, Fano \cite{Fa,Hw,PW,St} and complete
intersection \cite{PW,Sb} varieties.

In the critical case when the rank of Pfaff field $\F$ is equal to the
dimension of the invariant variety $X$, we show that one can substitute
for the stability condition the conditions of $X$ being Gorenstein and
smooth in codimension $1$, i.e. $\mathrm{codim}(Sing(X),X)\geq 2$.

\begin{theorem}\label{1}
Let $X\subset\mathbb{P}^n$ be a Gorenstein projective variety nonsingular in codimension $1$, which is invariant under a Pfaff field $\F$ on $\mathbb{P}^n$
whose rank is equal to the dimension of $X$. Then
\begin{equation}\label{f1}
\frac{2g(X,\mathcal{O}_{X}(1))-2}{\deg(X)}\leq \deg(\F)-1,
\end{equation}
\end{theorem}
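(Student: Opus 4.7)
Let $d=\deg(\F)$ and $m=\dim X$; by hypothesis the rank of $\F$ equals $m$. Compatibly with Theorem~\ref{2} specialised to the critical case $k=m$, a Pfaff field of rank $m$ and degree $d$ on $\mathbb{P}^n$ is a nonzero morphism $\F:\Omega^m_{\mathbb{P}^n}\to \mathcal{O}_{\mathbb{P}^n}(d-m)$. I would start by exploiting the invariance of $X$: by definition, $\F$ factors through the natural surjection $\Omega^m_{\mathbb{P}^n}|_X\twoheadrightarrow \Omega^m_X$, producing a nonzero sheaf map
\[
\eta:\Omega^m_X\longrightarrow \mathcal{O}_X(d-m).
\]

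The second and most delicate step is to upgrade $\eta$ to a morphism out of the dualizing sheaf $\omega_X$. Let $U\subseteq X$ be the smooth locus; smoothness in codimension one forces $\codim(X\setminus U,X)\geq 2$, and Gorensteinness makes $\omega_X$ invertible (in particular Serre's criterion $R_1+S_2$ shows $X$ is normal). On $U$ the K\"ahler sheaf $\Omega^m_U$ coincides with $\omega_U=\omega_X|_U$, so $\eta|_U$ is a section of the line bundle $(\mathcal{O}_X(d-m)\otimes \omega_X^{-1})|_U$. By Hartogs' extension on the normal variety $X$, this section extends uniquely to $X$, equivalently, to a nonzero morphism of line bundles
\[
\tilde\eta:\omega_X\longrightarrow \mathcal{O}_X(d-m).
\]

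The map $\tilde\eta$ corresponds to an effective divisor in the linear system $|\mathcal{O}_X(d-m)\otimes \omega_X^{-1}|$. Intersecting with $H^{m-1}$, where $H=c_1(\mathcal{O}_X(1))$, gives
\[
K_X\cdot H^{m-1}\leq (d-m)\deg(X).
\]
Plugging this into the sectional-genus formula
\[
2g(X,\mathcal{O}_X(1))-2=K_X\cdot H^{m-1}+(m-1)\deg(X)
\]
yields $2g(X,\mathcal{O}_X(1))-2\leq (d-1)\deg(X)$, which is inequality~(\ref{f1}).

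The main obstacle I anticipate is precisely the extension step: the map $\Omega^m_X\to \mathcal{O}_X(d-m)$ supplied by invariance must genuinely upgrade to a map $\omega_X\to \mathcal{O}_X(d-m)$, and both hypotheses are essential for this --- Gorensteinness to guarantee that $\omega_X$ is invertible, and smoothness in codimension one to make the singular locus thin enough for Hartogs-type extension on the normal scheme $X$. Once this upgrade is in hand, only adjunction and the effectivity of a nonzero map of line bundles remain, and these are routine.
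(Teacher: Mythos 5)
Your proposal is correct and follows essentially the same route as the paper: the paper identifies $\Omega^m_X$ with $\omega_X$ on the smooth locus via the Kunz--Lipman canonical map $\gamma_X$ and extends the resulting section of $\omega_X^{\vee}\otimes\mathcal{O}_X(d-m)$ across the codimension-two singular locus using that this sheaf is invertible (Gorenstein) and normal, which is exactly your Hartogs-type extension on the normal variety $X$. The concluding step --- nonnegativity of $\deg(\omega_X^{\vee}\otimes\mathcal{O}_X(d-m))$ combined with the sectional genus formula --- is the same computation as in the paper.
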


This generalizes a bound obtained by Campillo, Carnicer and de la
Fuente in \cite[Theorem 4.1 (a)]{C-C-F}. As an application, we
improve upon a bound obtained by Cruz and Esteves \cite[Corollary
4.5]{C-E}, see Section \ref{ci}.

This note is organized as follows. First, in order to make this
presentation as self-contained as possible, we provide all the
necessary definitions in Section \ref{s1}. The proofs of our main
results along with some further consequences are given in Sections
\ref{p1} and \ref{p2}.

%%%%%%%%%%%%%%%%%%%%%%%%%%%%%%%%%%%%%%%%%%%%%%%%%%%%%%%%%%%%%%%%%%%%%%%%%%%%%%%%%%%%%%%%%%%%%%%
%%%%%%%%%%%%%%%%%%%%%%%%%%%%%%%%%%%%%%%%%%%%%%%%%%%%%%%%%%%%%%%%%%%%%%%%%%%%%%%%%%%%%%%%%%%%%%%

\section{Background material}\label{s1}

We work over the field of complex numbers. Let $(X,L)$ be a
Gorenstein projective variety $X$ of dimension $n$ equipped
with a very ample line bundle $L$; recall that, since $X$ is
Gorenstein, the canonical divisor $K_{X}$ is a Cartier divisor.

\begin{definition}\label{sg}
The \emph{sectional genus} of $X$ with respect to $L$, denoted
$g(X,L)$, is defined by the formula:
$$ 2g(X,L)-2=(K_{X}+(\dim(X)-1)L)\cdot L^{\dim(X)-1}.$$
\end{definition}

This quantity has the following geometric interpretation. Suppose
that $X$ is nonsingular, and let $H_1,\dots,H_{n-1}$ be general
elements in the linear system $|L|$.  By Bertini's Theorem,
the curve $X_{n-1}=H_1\cap\cdots \cap H_{n-1}$ is nonsingular. Then
$g(X,L)$ coincides with the geometric genus of $X_{n-1}$, see
\cite[Remark 2.5]{Fk}.

\begin{definition}
Let $(V,L)$ be a nonsingular polarized algebraic variety.  A
Pfaff field $\F$ of rank $k$ on $V$ is a nonzero global
section of $\bigwedge^k \Theta_{V}\otimes N$, where
$\Theta_{V}$ is the tangent bundle and $N$ is a line bundle, where
$0<k<n$. The degree of $\F$ with respect to $L$ is defined by the
formula $\deg_L(\F)=\deg_L(N)+k\deg_L(L)$, where the degree of a
line bundle $N$ relative to $L$ is given by $\deg_L(N)=N\cdot
L^{\dim(V)-1}$.
\end{definition}

Since the ambient space is nonsingular, our definition is equivalent
to the one introduced in \cite[Section 3]{E-K}. In fact, since
$\bigwedge^k\Theta_{V}\otimes N \simeq \mathcal{H}om(\Omega^k_{V},N)
\simeq \mathcal{H}om(N^*,\bigwedge^k\Theta_{V})$, a Pfaff field can
also be regarded either as a map $\xi_{\F}:N^*\rightarrow\bigwedge^k \Theta_{V}$
or as a map $\xi_{\F}^\vee:\Omega^k_{V}\rightarrow N$. The present definition emphasizes the existence of a global section of $\bigwedge^k \Theta_{V}\otimes N$, which will play a central role in our arguments.

\begin{definition}
The \emph{singular set} of $\F$ is given by
\begin{center}
$Sing(\F)=\{x\in V;\ \xi_{\F}(x) \ \ $is not injective$\}=\{x\in V;\
\xi_{\F}^\vee(x) \ \ $is not surjective$\}$.
\end{center}
\end{definition}

For instance, a Pfaff field of rank $k$ on $\mathbb{P}^n$ is a
section of
$\bigwedge^k\Theta_{\mathbb{P}^n}\otimes\mathcal{O}_{\mathbb{P}^n}(s)$,
and $\deg_{\opn(1)}(\F)=s+k$.

More generally, if $\mathrm{Pic}(V)\simeq\mathbb{Z}$ and
$L:=\mathcal{O}_{V}(1)$ is the positive generator of
$\mathrm{Pic}(V)$, then a Pfaff field of rank $k$ on $V$ is a
section of $\bigwedge^k\Theta_{V}\otimes\mathcal{O}_{V}(s)$,
for some $s\in\mathbb{Z}$.
Thus, $\deg_L(\F)=(s+k)\deg(V)$, where $\deg(V)=\deg_L(L)$. If we define
$d_{\F}:=s+k$ we have
$\deg_L(\F)=d_{\F}\cdot\deg(V).$

Alternatively, a Pfaff field can also be defined as a
global section of $\Omega^{n-k}_V\otimes N'$, where
$N'=N\otimes K_V^{-1}$. If $V$ is nonsingular, this definition is
equivalent to the one above.

Let $X\subset V$ be a closed subscheme of dimension larger than or
equal to the rank of a Pfaff field $\F$. Following
\cite[Section 3]{E-K}, we introduce the following definition.

\begin{definition}
We say $X$ is invariant under $\F$ if  $X\not\subset Sing(\F)$ and
there exists a morphism of sheaves $\phi:\Omega_{X}^k\rightarrow
N|_{X}$ such that the following diagram
$$
\xymatrix{
\Omega_{V}^k|_X  \ar[d]\ar[r]^{\xi_{\F}^{\vee}|_X} & N|_{X} \\
\Omega_{X}^k  \ar[ru]^\phi & }
$$
commutes.
\end{definition}

Applying the functor $\mathcal{H}om(\cdot, \mathcal{O}_{X})$ to the
above diagram, we get the following commutative diagram:
$$ \xymatrix{
N^*|_{X} \ar[d]^{\phi^\vee} \ar[dr]^{\xi_{\F}|_X} & \\
(\Omega_{X}^k)^{\vee} \ar[r] & \bigwedge^k \Theta_{V}|_{X} }. $$
Therefore, $X$ is invariant under $\F$ if $\xi_{\F}|_X$ induces a nonzero
global section of $(\Omega_{X}^k)^{\vee}\otimes N|_{X}$.

Our two main results are concerned only with the case when $V=\pn$;
but we would like to conclude this section with two general
propositions.

Let $E$ be a torsion-free sheaf on $V$. The ratio $\mu_L(E)=\deg_L(E)/{\rm rk}(E)$
is called the slope of $E$, where $\deg_L(E)=\deg_L((\Lambda^rE)^{\vee\vee})$ and
$r={\rm rk}(E)$. Recall that $E$ is \emph{semistable} (in the sense of Mumford--Takemoto)
if every torsion-free subsheaf $E'$ of $E$ satisfies $\mu_L(E')\le\mu_L(E)$. Furthermore, $E$ is \emph{stable} if the strict inequality is satisfied for proper subsheaves.
Further details can be found in \cite[Sections V.6 and V.7]{Ko}.

\begin{proposition}
If $\Theta_{V}$ is stable, then the following inequality holds:
$$ \deg_L(\F) \ge {\rm rk}(\F)\left( \deg_L(V)+\frac{\deg_L(K_V)}{\dim(V)} \right) . $$
\end{proposition}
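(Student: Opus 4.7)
The plan is to exploit the fact that a Pfaff field is, by definition, a nonzero global section of $\bigwedge^k\Theta_V\otimes N$, which is the same as a nonzero sheaf map $\xi_{\F}:N^{-1}\to\bigwedge^k\Theta_V$. Since $N^{-1}$ is a line bundle and the map is nonzero, it is injective, so $N^{-1}$ becomes a rank-one subsheaf of $\bigwedge^k\Theta_V$. The idea is then to play this off against the semistability of the exterior power.

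Next, I would invoke the classical fact that, in characteristic zero, exterior (and tensor) powers of a (semi)stable torsion-free sheaf are semistable; in our setting, the assumed stability of $\Theta_V$ gives that $\bigwedge^k\Theta_V$ is semistable with respect to $L$. Hence every subsheaf $E'\subset\bigwedge^k\Theta_V$ satisfies $\mu_L(E')\le\mu_L(\bigwedge^k\Theta_V)$. Applying this to the subsheaf $N^{-1}$ produced in the previous paragraph yields
\[
-\deg_L(N)\;=\;\mu_L(N^{-1})\;\le\;\mu_L\!\left(\bigwedge^k\Theta_V\right).
\]

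The last step is a routine slope computation: using $\det(\bigwedge^k E)=(\det E)^{\binom{r-1}{k-1}}$ for $E$ of rank $r$, one obtains
\[
\mu_L\!\left(\bigwedge^k\Theta_V\right)\;=\;k\,\mu_L(\Theta_V)\;=\;-\,\frac{k\,\deg_L(K_V)}{\dim(V)},
\]
since $\Theta_V$ has rank $\dim(V)$ and $\deg_L(\Theta_V)=-\deg_L(K_V)$. Combining the two displays gives $\deg_L(N)\ge k\,\deg_L(K_V)/\dim(V)$, and adding $k\,\deg_L(L)=k\,\deg_L(V)$ to both sides and using $\deg_L(\F)=\deg_L(N)+k\,\deg_L(L)$ together with $k=\mathrm{rk}(\F)$ produces exactly the claimed inequality.

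The only genuine subtlety, and thus the main obstacle, is the invocation of semistability of $\bigwedge^k\Theta_V$ from the stability of $\Theta_V$; this is a nontrivial but well-established theorem in characteristic zero (see, e.g., the discussion in the reference \cite{Ko} already cited in the paper), so I would simply quote it. Everything else reduces to the definition of a Pfaff field and its degree, plus the standard behaviour of slope under exterior powers.
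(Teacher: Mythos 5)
Your argument is correct and follows essentially the same route as the paper: stability of $\Theta_V$ gives semistability of $\bigwedge^k\Theta_V$ with slope $k\mu_L(\Theta_V)$, the Pfaff field realizes $N^*$ as a subsheaf, and comparing slopes yields the inequality. The only difference is cosmetic — you spell out the injectivity of $\xi_{\F}$ and the exterior-power slope computation, which the paper quotes from Ancona--Ottaviani.
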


If $V=\pn$ the above inequality becomes $\deg(\F)\ge0$. Bott's
formula \cite[page 8]{OSS} implies the existence of a rank $k$ Pfaff
field of degree $0$ for each $k$, hence in this case the bound given
above is sharp.

\begin{proof}
The stability of $\Theta_{V}$ implies that $\bigwedge^k\Theta_{V}$
is semistable with slope equal to $k\mu_L(\Theta_{V})$
\cite[Corollary 1.6]{AO}. As observed above, a Pfaff
field $\F$ of rank $k$ induces a map $\xi_{\F}:N^* \to
\bigwedge^k\Theta_{V}$, so from the semistability of
$\bigwedge^k\Theta_{V}$ we conclude that $-\deg_L(N)\le
k\mu_L(\Theta_{V})=-k\deg_L(K_V)/\dim(V)$. The stated inequality
follows easily.
\end{proof}

If $D$ is a divisor on an algebraic variety $V$
with $\mathrm{Pic}(V)\simeq\mathbb{Z}$, then
$\mathcal{O}_{V}(D)=\mathcal{O}_{V}(d_{D})$, for some $d_{D}\in
\mathbb{Z}$. In this case, we denote $\kappa(V)=d_{K_{V}}$.

\begin{proposition}\label{completa2}
Let $V$ be a $n$-dimensional nonsingular algebraic variety with
$\mathrm{Pic}(V)\simeq\mathbb{Z}$.
Let $X$ be a $k$-dimensional nonsingular complete intersection of  hypersurfaces $D_1,\dots,D_{n-k}$ on
$V$. If $X$ is invariant under a Pfaff field $\F$ of rank $k$ on $V$, then
$$ d_{D_1}+\cdots+d_{D_{n-k}}\leq d_{\F}-k-\kappa(V).$$
\end{proposition}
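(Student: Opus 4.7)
The plan is to exploit the equality of dimension, $\dim X = k = \mathrm{rk}(\F)$, which collapses the statement into a degree comparison of a single line bundle on $X$. First, since $X$ is invariant under $\F$, the definition (via the commutative diagram) gives a nonzero global section of $(\Omega_X^k)^\vee \otimes N|_X$. Because $\dim X = k$, the top exterior power $\Omega_X^k$ coincides with the canonical sheaf $\omega_X = K_X$, so this nonzero section lives in
$$ H^0\bigl(X, K_X^{-1} \otimes N|_X\bigr).$$

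Next, I would apply the adjunction formula to the smooth complete intersection $X = D_1 \cap \cdots \cap D_{n-k}$:
$$K_X \;\cong\; \bigl(K_V + D_1 + \cdots + D_{n-k}\bigr)\big|_X .$$
Translating everything through the isomorphism $\mathrm{Pic}(V)\simeq\mathbb{Z}$ with $\mathcal{O}_V(D_i) = \mathcal{O}_V(d_{D_i})$, $K_V = \mathcal{O}_V(\kappa(V))$, and $N = \mathcal{O}_V(d_\F - k)$, we obtain
$$K_X^{-1} \otimes N|_X \;\cong\; \mathcal{O}_V\!\bigl(d_\F - k - \kappa(V) - d_{D_1} - \cdots - d_{D_{n-k}}\bigr)\big|_X .$$

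The final step is to conclude that the integer $a := d_\F - k - \kappa(V) - (d_{D_1}+\cdots+d_{D_{n-k}})$ is nonnegative, which is exactly the stated inequality. This follows because $\mathcal{O}_V(1)|_X$ is an ample line bundle on the projective variety $X$ (it is the restriction of the ample positive generator of $\mathrm{Pic}(V)$), so for $a<0$ the restricted bundle $\mathcal{O}_V(a)|_X$ has strictly negative degree with respect to this polarization and hence no nonzero global section; the existence of the section produced in the first step forces $a \ge 0$.

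The only real subtlety, and the step I expect to be the main point to justify carefully, is the positivity argument used in the final paragraph: one needs to be sure that $\mathcal{O}_V(1)|_X$ is ample on $X$ and that the $d_{D_i}$ are positive, so that the sign of the degree of $\mathcal{O}_V(a)|_X$ is genuinely controlled by the sign of $a$. Once this is in place, the rest of the proof is a direct combination of the invariance hypothesis, the identification $\Omega_X^k = K_X$, and the adjunction formula.
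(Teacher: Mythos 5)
Your proposal is correct and follows essentially the same route as the paper: invariance yields a nonzero section of $\bigwedge^k\Theta_X\otimes\mathcal{O}_V(d_{\F}-k)|_X$ (which, since $\dim X=k$, is exactly your $K_X^{-1}\otimes N|_X$), adjunction for the complete intersection identifies this with $\mathcal{O}_V(d_{\F}-k-\kappa(V)-\sum d_{D_i})|_X$, and nonnegativity of its degree gives the inequality. Your final paragraph merely makes explicit the sign argument (ampleness of $\mathcal{O}_V(1)|_X$) that the paper leaves implicit.
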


\begin{proof}
Since $X$ is invariant by $\F$ we have that
$H^0(X,\bigwedge^k\Theta_X\otimes\mathcal{O}_{V}(d_{\F}-k)|_{X})\neq\{0\}$,
then $\deg(\bigwedge^k\Theta_X\otimes
\mathcal{O}_{V}(d_{\F}-k)|_{X})\geq 0$. Let $\mathcal{O}_{V}(D_i)$
be the line bundle associated to the hypersurface $D_i$,
$i=1,\dots,n-k$. We have the following adjunction formula
$$
\bigwedge^k\Theta_X=\bigwedge^n\Theta_V|_{X}\otimes
\mathcal{O}_{V}(-D_1)|_{X}\otimes\cdots\mathcal{O}_{V}(-D_{n-k})|_{X}.
$$
Therefore
$\bigwedge^k\Theta_X=\mathcal{O}_{V}(-\kappa(V)-d_{D_1}-\cdots-d_{D_{n-k}})|_{X}$,
thus
$$
\deg(\mathcal{O}_{V}(d_{\F}-k-\kappa(V)-d_{D_1}-\cdots-d_{D_{n-k}})|_{X})=\deg(\bigwedge^k\Theta_X\otimes
\mathcal{O}_{V}(d_{\F}-k)|_{X})\geq 0.
$$
\end{proof}

%%%%%%%%%%%%%%%%%%%%%%%%%%%%%%%%%%%%%%%%%%%%%%%%%%%%%%%%%%%%%%%%%%%%%%%%%%%%%%%%%%%%%%%%%%%%%%%
%%%%%%%%%%%%%%%%%%%%%%%%%%%%%%%%%%%%%%%%%%%%%%%%%%%%%%%%%%%%%%%%%%%%%%%%%%%%%%%%%%%%%%%%%%%%%%%

\section{Proof of Theorem \ref{2}}\label{p2}

We recall that the stability of $\Theta_{X}$ implies that
$\bigwedge^k\Theta_{X}$ is semistable. Since $X$ is invariant under
$\F$, we can conclude that
$H^0(X,\bigwedge^k\Theta_X\otimes\mathcal{O}_{X}(d-k))\neq\{0\}$,
with  $d=\deg(\F)$. It then follows from the semistability of
$\bigwedge^k\Theta_X$ that
$\bigwedge^k\Theta_X\otimes\mathcal{O}_{X}(d-k)$ is also semistable,
thus
\begin{equation}\label{grau1}
\deg(\bigwedge^k\Theta_X\otimes\mathcal{O}_{X}(d-k))\geq 0.
\end{equation}
On the other hand, note that
\begin{equation}\label{grau2}
\deg(\bigwedge^k\Theta_X)=-{\dim(X)-1\choose k-1}\deg(K_{X}).
\end{equation}
Let $i:X\rightarrow \mathbb{P}^n$ be the embedding, and set, as
usual, $\mathcal{O}_{X}(1)=i^*\mathcal{O}_{\mathbb{P}^n}(1)$. Now,
we consider the following difference, using \ref{grau2}:
$$ (2g(X,\mathcal{O}_{X}(1))-2) - \left[ \frac{\mathcal{O}_{X}(d-k)}{{m-1\choose k-1}
}+(m-1)\mathcal{O}_{X}(1)\right]\cdot\mathcal{O}_{X}(1)^{m-1}= $$
$$ -\left(-K_{X}+ \frac{\mathcal{O}_{X}(d-k)}{{m-1\choose k-1}}\right)\cdot\mathcal{O}_{X}(1)^{m-1} =
-\frac{\deg(\bigwedge^k\Theta_X\otimes\mathcal{O}_{X}(d-k))}{{m-1\choose
k-1}}.
$$

It follows from (\ref{grau1}) that the difference must be less
than or equal to zero, hence
$$
\begin{array}{ccl}
  2g(X,\mathcal{O}_{X}(1))-2 & \leq & \left[ \frac{\mathcal{O}_{X}(d-k)}{{m-1\choose k-1}}+(m-1)
  \mathcal{O}_{X}(1)\right]\cdot\mathcal{O}_{X}(1)^{m-1}
\\
  \\
  & \leq & \deg(X)\left( \dfrac{d-k}{{m-1\choose k-1}}+m-1\right).
\end{array}
$$
This completes the proof of Theorem \ref{2}.

Let us now consider applications of Theorem \ref{2} to a few
particular cases. First, specializing to the case when the invariant
variety is Fano with Picard number one, i.e., $\deg(K_X)<0$ and
$\rho(X)=rank(NS(X))=1$, where $NS(X)$ is the N\'eron--Severi group of
$X$.

\begin{corollary}
Let $X$ be a nonsingular Fano variety, with Picard number one, and let
$\ox(1):=K_{X}^{-1}$. If $X$ is invariant under a Pfaff field $\F$ of
rank $k=\dim(X)$, then
$$
\deg_{K_X^{-1}}(X)\leq k^k(\deg(\F)+2)^k,
$$
where $\deg_{K_X^{-1}}(X)$ is the degree of $X$ with respect to the
anticanonical polarization.
\end{corollary}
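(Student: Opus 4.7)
The plan is to specialize Theorem~\ref{2} to the top-rank case $m = k = \dim(X)$ and then exploit the anticanonical polarization. Since $X$ is Fano with Picard number one, its tangent bundle $\Theta_X$ is stable (by the results cited after Theorem~\ref{2}), so Theorem~\ref{2} applies. Because $\binom{k-1}{k-1} = 1$, its bound collapses to
\[
\frac{2g(X, \ox(1)) - 2}{\deg(X)} \leq \deg(\F) - 1.
\]

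I would then set $\ox(1) := K_X^{-1}$ and simplify both sides using Definition~\ref{sg}. A direct computation yields $2g - 2 = (k-2)(-K_X)^k$ and $\deg(X) = (-K_X)^k = \deg_{K_X^{-1}}(X)$, so the inequality telescopes to $k - 2 \leq \deg(\F) - 1$, giving $k \leq \deg(\F) + 2$ and hence
\[
k^k \leq (\deg(\F) + 2)^k.
\]
This supplies the ``dimension'' factor in the claimed bound.

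To handle $\deg_{K_X^{-1}}(X)$ itself, I would write $-K_X = rH$ where $H$ is the ample generator of $\mathrm{Pic}(X) = \mathbb{Z}$ and $r$ is the Fano index. The Kobayashi--Ochiai theorem gives $r \leq k+1$, which combined with boundedness of Fano varieties with Picard number one should yield $(-K_X)^k \leq (k+1)^k$; then
\[
\deg_{K_X^{-1}}(X) = r^k H^k \leq (k+1)^k \leq (2k)^k = k^k\cdot 2^k.
\]
Since $\deg(\F) \geq 0$ on $\mathbb{P}^n$ (the first Proposition of Section~\ref{s1}), one has $2^k \leq (\deg(\F)+2)^k$, so multiplying the two estimates gives the desired $\deg_{K_X^{-1}}(X) \leq k^k(\deg(\F)+2)^k$.

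The main obstacle I expect is the intrinsic estimate $(-K_X)^k \leq (k+1)^k$ for Fano varieties with Picard number one: everything else is formal manipulation, but this step requires classification (in low dimensions) or a boundedness argument. A more self-contained alternative would be to recover a bound on $H^k$ directly from Theorem~\ref{2} applied in the embedded polarization $\ox(1) = aH$, where it yields $r \geq a(k - \deg(\F))$; pairing this with $r \leq k+1$ might furnish the needed control without appealing to external input.
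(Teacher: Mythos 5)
The first half of your argument coincides with the paper's: with $\ox(1)=K_X^{-1}$ one computes $2g(X,K_X^{-1})-2=(k-2)\deg_{K_X^{-1}}(X)$, and Theorem~\ref{2} with $m=k$ gives $k-2\leq\deg(\F)-1$, i.e.\ $k\leq\deg(\F)+1$ (your ``$k\leq\deg(\F)+2$'' is a harmless weakening). The problem is the second half. Your route needs the intrinsic estimate $(-K_X)^k\leq(k+1)^k$ for Fano varieties with Picard number one, and this is not available: writing $-K_X=rH$ with $H$ the ample generator, Kobayashi--Ochiai bounds the index $r\leq k+1$ but says nothing about $H^k$, which is not bounded by $1$; the assertion $(-K_X)^k\leq(k+1)^k$ is in fact a well-known \emph{open conjecture} (sharp for $\mathbb{P}^k$), not a consequence of ``boundedness'' in any off-the-shelf form. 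Your proposed fallback does not repair this: applying Theorem~\ref{2} in the embedded polarization yields a relation among $r$, $a$, $k$ and $\deg(\F)$ in which $H^k$ never appears, so it cannot control $\deg_{K_X^{-1}}(X)=r^kH^k$. You correctly identified this as the main obstacle, but it is exactly the missing idea, so the proof as proposed is incomplete.

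The paper closes this gap with Nadel's effective boundedness theorem for Fano varieties with Picard number one: if $d(X)$ denotes the least integer $d$ such that $X$ is covered by rational curves of anticanonical degree at most $d$, then $d(X)\leq k+1$ and $\deg_{K_X^{-1}}(X)\leq\bigl(d(X)\,k\bigr)^k$. Combining $d(X)\leq k+1\leq\deg(\F)+2$ (the latter from the Theorem~\ref{2} step you already have) gives
\begin{equation*}
\deg_{K_X^{-1}}(X)\leq\bigl(d(X)\,k\bigr)^k\leq k^k(\deg(\F)+2)^k .
\end{equation*}
Note also that the shape of the final bound, with the factor $(\deg(\F)+2)^k$ entering through $d(X)\leq k+1\leq\deg(\F)+2$, genuinely uses the invariance under $\F$; in your version the Pfaff field entered only through $\deg(\F)\geq0$, a further sign that the intended argument is different. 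Replace your degree estimate by the citation of Nadel's result and the proof matches the paper's.
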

\begin{proof}
Indeed, in this case we have
$$
2g(X,K_X^{-1})-2=(k-2)\deg_{K_X^{-1}}(X).
$$
Thus, it follows from Theorem \ref{2} that $k\leq \deg(\F)+1$. On
the other hand, it follows from \cite{Na} that $d(X)\leq k+1$ and
$\deg_{K_X^{-1}}(X)\leq (d(X)k)^k$ , where $d(X)$ is the least
positive integer $d$ for which $X$ can be covered by rational
curves of (anticanonical) degree at most $d$, see \cite[Subsection
1.3]{Na}.
\end{proof}

Finally, we also consider the case when the invariant variety is
Calabi--Yau, i.e. $K_X=0$.

\begin{corollary}
If $X$ is Calabi--Yau and invariant by $\F$ then ${\rm rk}(\F)\leq\deg(\F)$.
\end{corollary}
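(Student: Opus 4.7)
The plan is to derive the corollary directly from Theorem \ref{2}. The two ingredients needed are (i) the stability of the tangent bundle of a Calabi--Yau variety, which is \cite{T} already invoked in the Introduction, and (ii) the computation of the sectional genus under $K_X = 0$.

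For (ii), Definition \ref{sg} together with $K_X = 0$ gives
$$ 2g(X, \mathcal{O}_X(1)) - 2 = (m-1)\mathcal{O}_X(1)^m = (m-1)\deg(X), $$
where $m = \dim(X)$, so the left-hand side of (\ref{f2}) is exactly $m-1$. Substituting this into (\ref{f2}) and cancelling the common term $m-1$ on both sides, the inequality collapses to
$$ 0 \leq \frac{\deg(\F) - k}{\binom{m-1}{k-1}}. $$
Since $m \geq k$ (which is automatic: an invariant subvariety under a Pfaff field of rank $k$ must have dimension at least $k$), the binomial coefficient is a positive integer, whence $k \leq \deg(\F)$, i.e.\ ${\rm rk}(\F) \leq \deg(\F)$.

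There is essentially no obstacle: the argument is a two-line specialization of Theorem \ref{2}. If one preferred to avoid citing Theorem \ref{2} as a black box, one could simply rerun its proof in this situation; the simplification is that by (\ref{grau2}) the degree of $\bigwedge^k \Theta_X$ vanishes identically when $K_X = 0$, so (\ref{grau1}) becomes $(\deg(\F) - k)\deg(X) \geq 0$, yielding the same conclusion upon dividing by $\deg(X) > 0$.
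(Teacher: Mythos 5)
Your proposal is correct and is exactly the argument the paper intends: the corollary is stated as an immediate specialization of Theorem \ref{2}, using the stability of the tangent bundle of a Calabi--Yau variety from \cite{T} and the fact that $K_X=0$ reduces the left-hand side of (\ref{f2}) to $m-1$, leaving $0\leq(\deg(\F)-k)/\binom{m-1}{k-1}$. The only (harmless) slip is in your final remark: with $K_X=0$, (\ref{grau1}) reads $\binom{m}{k}(\deg(\F)-k)\deg(X)\geq 0$ rather than $(\deg(\F)-k)\deg(X)\geq 0$, which of course yields the same conclusion.
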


In other words, Pfaff fields of small degree do not
admit invariant Calabi--Yau varieties.

%%%%%%%%%%%%%%%%%%%%%%%%%%%%%%%%%%%%%%%%%%%%%%%%%%%%%%%%%%%%%%%%%%%%%%%%%%%%%%%%%%%%%%%%%%%%%%%
%%%%%%%%%%%%%%%%%%%%%%%%%%%%%%%%%%%%%%%%%%%%%%%%%%%%%%%%%%%%%%%%%%%%%%%%%%%%%%%%%%%%%%%%%%%%%%%

\section{Proof of Theorem \ref{1}}\label{p1}

First, let us briefly recall the construction of the so-called
\emph{canonical map} $\gamma_{X}:\Omega^k_{X}\rightarrow\omega_{X},$
where $\omega_{X}$ is the dualizing sheaf of $X$, as it was done in
\cite[Section 3]{C-E}.

Let $X$ be a reduced projective variety of pure dimension $k$, and
let $X_1,\dots, X_s$ be its irreducible components. For each
$i=1,\dots,s$, consider Kunz's sheaf $\widetilde{\omega}_{X_i}$
of regular differential forms of $X_i$, see \cite{Kz}. By
definition, the canonical map $\gamma_{X}$ is the composition
$$
\begin{array}{ccccccccc}
  \Omega^k_{X} & \stackrel{\widetilde{\tau}}{\longrightarrow} & \bigoplus_{i=1}^{s}
  \Omega^k_{X_i} & \stackrel{(\gamma_{1},\dots,\gamma_{s})}{\longrightarrow}
  & \bigoplus_{i=1}^{s} \widetilde{\omega}_{X_i} & \stackrel{(\zeta_{1},\dots,\zeta_{s})}{\longrightarrow} &
  \bigoplus_{i=1}^{s} \omega_{X_i} &
  \stackrel{\tau}{\longrightarrow}&\omega_{X},
\end{array}
$$
where $\widetilde{\tau}$ and $\tau$ are the maps induced by
restriction, for each $i=1,\dots,s$ the map
 $\gamma_{i}:\Omega^k_{X_i}\rightarrow \widetilde{\omega}_{X_i}$ is the canonical class of $X_i$, constructed by Lipman  in \cite{L}, which is an isomorphism on
the nonsingular locus of $X_i$. Moreover, $\zeta_{i}:
\widetilde{\omega}_{X_i}\rightarrow \omega_{X_i}$ is a isomorphism
on $X_{i}$, since it follows from \cite[Theorem 0.2B]{L} that
$\widetilde{\omega}_{X_i}$ is dualizing. Therefore, $\gamma_{X}$ is
an isomorphism on the nonsingular locus  $X_0:=X-Sing(X)$. Thus the
map
$$ \widetilde{\gamma_X}=\gamma_{X}^{\vee}\otimes\id_{\ox(d-k)}:
\omega_{X}^{\vee}\otimes\mathcal{O}_{X}(d-k) \rightarrow
(\Omega^k_{X})^{\vee}\otimes \mathcal{O}_{X}(d-k)$$ is also an
isomorphism when restricted to $X_0$.

Now assume that $X\subset\mathbb{P}^n$ is a Gorenstein variety of
pure dimension $k$ such that $\codim(Sing(X),X)\geq 2$. Then the
sheaf $\omega_{X}^{\vee}$ is locally-free, hence, in particular,
reflexive. Moreover, from \cite[Proposition 5.21]{Ko}, we also
conclude that $\omega_{X}^{\vee}$ is normal.

If $X$ is invariant under a Pfaff field $\F$ on
$\mathbb{P}^n$ of rank $k$ and degree $d$, then we have a nonzero
global section $\zeta_{\F}$ of $(\Omega^k_{X})^{\vee}\otimes
\mathcal{O}_{X}(d-k)$; consider its restriction
$\zeta_{\F,0}=\zeta_{\F}|_{X_0}$ to $X_0$. Composing it with the the
inverse of $\widetilde{\gamma_X}|_{X_0}$, the restriction of the map
$\widetilde{\gamma_X}$ to $X_0$, we obtain a section
$$ \widetilde{\gamma_X}|_{X_0}(\zeta_{\F,0})\in H^0(X_0,\omega_{X}^{\vee}\otimes \mathcal{O}_{X}(d-k)|_{X_0}). $$
However, $\omega_{X}^{\vee}\otimes \mathcal{O}_{X}(d-k)|_{X_0}$ is a
normal sheaf, so the above section extends to a global section of
$\omega_{X}^{\vee}\otimes \mathcal{O}_{X}(d-k)$. In particular,
$H^0(X,\omega_{X}^{\vee}\otimes \mathcal{O}_{X}(d-k))\neq\{0\}$,
therefore
\begin{equation}\label{grau}
\deg(\omega_{X}^{\vee}\otimes \mathcal{O}_{X}(d-k))\geq0.
\end{equation}

Let $K_{X}$ be a Cartier divisor such that
$\mathcal{O}_{X}(K_{X})=\omega_{X}$.

Now, consider the following difference
$$ (2g(X,\mathcal{O}_{X}(1))-2) -[ \mathcal{O}_{X}(d-k)+(k-1)\mathcal{O}_{X}(1)]\cdot\mathcal{O}_{X}(1)^{k-1} = $$
$$ -\left(K_{X}^{-1}+ \mathcal{O}_{X}(d-k)\right)\cdot\mathcal{O}_{X}(1)^{k-1} =
-\deg(\omega_{X}^{\vee}\otimes\mathcal{O}_{X}(d-k))\leq 0. $$

%%%%%%%%%%%%%%%%%%%%%%%%%%%%%%%%%%%%%%%%%%%%%%%%%%%%%%%%%%%%%%%%%%%%%%%%%%%%%%%%%%%%%%%%%%%%%%%
%%%%%%%%%%%%%%%%%%%%%%%%%%%%%%%%%%%%%%%%%%%%%%%%%%%%%%%%%%%%%%%%%%%%%%%%%%%%%%%%%%%%%%%%%%%%%%%

\section{Complete intersection invariant varieties}\label{ci}

We specialize to the case when the invariant variety $X$ is a complete intersection.

First, we notice that the inequality of Theorem \ref{2} is not sharp
in general. To see this, let $X$ be a nonsingular complete
intersection variety of dimension $m$ and multidegree
$(d_1,\dots,d_{n-m})$, which is invariant under a $k$-dimensional
Pfaff field $\F$ on $\mathbb{P}^n$; assume that $m\ge k$. It follows
from \cite[Corollary 1.5]{PW} that $\Theta_X$ is stable and one can
apply Theorem \ref{2} to obtain the following inequality:
$$
d_1+\cdots+d_{n-m}\leq \dfrac{\deg(\F)-k}{{m-1\choose k-1}}+n+1.
$$
Setting $m=n-1$ and $k=1$, the inequality reduces to $d_1 \le \deg(\F) + n$.
However, Soares has shown, under the same circumstances, that $d_1
\le \deg(\F) + 1$ \cite[Theorem B]{S}.

In the critical case $\dim(X)={\rm rank}(\F)$, Theorem \ref{1} gives us the following Corollary.

\begin{corollary}\label{completa}
Let $X$ be a $k$-dimensional complete intersection variety of multidegree $(d_1,\dots,d_{n-k})$ such that
either $X$ is nonsingular in codimension $1$. If $X$ is invariant under a Pfaff field $\F$ of rank
$k$ on $\mathbb{P}^n$, then
$$ d_1+\cdots+d_{n-k}\leq\deg(\F)+n-k+1.$$
\end{corollary}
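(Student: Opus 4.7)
The plan is to apply Theorem \ref{1} directly, after verifying that its hypotheses hold and then computing both sides of the inequality explicitly in terms of the multidegree $(d_1,\ldots,d_{n-k})$. The only nontrivial thing to check is that a complete intersection variety $X\subset\mathbb{P}^n$ is automatically Gorenstein; combined with the stated hypothesis that $X$ is nonsingular in codimension $1$, this puts us in the setting of Theorem \ref{1}.

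Next, I would compute the quantity $(2g(X,\mathcal{O}_X(1))-2)/\deg(X)$ by unwinding Definition \ref{sg}. For a complete intersection, the degree is the product $\deg(X)=d_1\cdots d_{n-k}$, and the adjunction formula yields $K_X=\mathcal{O}_X(d_1+\cdots+d_{n-k}-n-1)$. Plugging these into
\[
2g(X,\mathcal{O}_X(1))-2=\bigl(K_X+(k-1)\mathcal{O}_X(1)\bigr)\cdot \mathcal{O}_X(1)^{k-1},
\]
and dividing by $\deg(X)=\mathcal{O}_X(1)^k$, I obtain
\[
\frac{2g(X,\mathcal{O}_X(1))-2}{\deg(X)}=d_1+\cdots+d_{n-k}-n+k-2.
\]

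Finally, I would substitute this expression into the bound $(2g(X,\mathcal{O}_X(1))-2)/\deg(X)\le \deg(\F)-1$ given by Theorem \ref{1}. Rearranging yields exactly
\[
d_1+\cdots+d_{n-k}\le \deg(\F)+n-k+1,
\]
which is the claimed inequality. The main (and essentially only) obstacle is purely bookkeeping in the sectional genus calculation; once the adjunction formula is in place, the corollary follows immediately from Theorem \ref{1}. No additional geometric input beyond what is already established in Sections \ref{s1} and \ref{p1} is required.
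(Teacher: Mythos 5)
Your proposal is correct and follows essentially the same route as the paper: adjunction for the (dualizing) sheaf of a complete intersection gives $2g(X,\mathcal{O}_X(1))-2=\deg(X)\left(d_1+\cdots+d_{n-k}-n+k-2\right)$, and Theorem \ref{1} then yields the bound. Your added remark that complete intersections are automatically Gorenstein (so Theorem \ref{1} applies under the codimension-one smoothness hypothesis) is a point the paper leaves implicit, but the argument is the same.
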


\begin{proof}
From the adjunction formula for dualizing sheaves one obtains
$$ 2g(X,\mathcal{O}_{X}(1))-2 = \deg(X) \left( d_1+\cdots+d_{n-k} -n +k -2 \right). $$
By Theorem \ref{1}, this is less than or equal to
$(\deg(\F)-1)\deg(X)$, and the desired inequality follows easily.
\end{proof}

It follows from \cite[Corollary 4.5]{C-E} that if $X$ and $\F$ are
as above, then
$$
d_1+\cdots+d_{n-k}\leq \left\{
  \begin{array}{ll}
    \deg(\F)+n-k, & \hbox{if } \ \rho\leq 0  \\
\\
    \deg(\F)+n-k+\rho, & \hbox{if } \rho>0
  \end{array}
\right.
$$
where $\rho:= \sigma+n-k+1-d_1-\cdots-d_{n-k}$, with $\sigma$
denoting the Castelnuovo--Mumford regularity of the singular locus of
$X$. Therefore, Corollary \ref{completa} allows us to conclude that
if $X$ is nonsingular in codimension $1$, then one can take $\rho=1$, regardless
of $\sigma$.

%%%%%%%%%%%%%%%%%%%%%%%%%%%%%%%%%%%%%%%%%%%%%%%%%%%%%%%%%%%%%%%%%%%%%%%%%%%%%%%%%%%%%%%%%%%%%%%
%%%%%%%%%%%%%%%%%%%%%%%%%%%%%%%%%%%%%%%%%%%%%%%%%%%%%%%%%%%%%%%%%%%%%%%%%%%%%%%%%%%%%%%%%%%%%%%

%{\footnotesize

%}

\end{document}